\newtheorem{theorem}{Theorem}[section]
\newtheorem*{theorem*}{Theorem}
\newtheorem{proposition}[theorem]{Proposition} 
\newtheorem{observation}[theorem]{Observation} 
\newtheorem{lemma}[theorem]{Lemma}
\newtheorem{cor}[theorem]{Corollary}
\newtheorem{fact}[theorem]{Fact}
\theoremstyle{definition}
\newtheorem{definition}[theorem]{Definition}
\newtheorem{question}[theorem]{Question}
\theoremstyle{remark}
\newtheorem{remark}[theorem]{Remark}
\newcommand{\defemp}{\textit}
\newcommand{\forces}{\Vdash}
\newcommand{\zf}{\textrm{ZF}}
\newcommand{\ad}{\textrm{AD}}
\newcommand{\zfc}{\textrm{ZFC}}
\newcommand{\hod}{\textrm{HOD}}
\newcommand{\ch}{\textnormal{CH}}
\newcommand{\dom}{\textnormal{Dom}}
\newcommand{\mc}{\mathcal}
\newcommand{\mbb}{\mathbb}
\newcommand{\p}{\mathcal{P}}
\newcommand{\baire}{{^\omega \omega}}
\newcommand{\bairenodes}{{^{<\omega}\omega}}
\title[Coding with Help and Amalgamation Failure]
 {Generic Coding with Help \\ and Amalgamation Failure }
\author{Sy-David Friedman}
\author{Dan Hathaway}
\address{
Sy-David Friedman \\
Kurt G\"odel Research Center for Mathematical Logic \\
University of Vienna \\
Vienna, Austria}
\email{sdf@logic.univie.ac.at}
\address{
Dan Hathaway \\
Mathematics Department \\
University of Vermont\\
Burlington, VT 05401, U.S.A.}
\email{Daniel.Hathaway@uvm.edu}
\thanks{
The first author was partially supported
  by the FWF (Austrian Science Fund) grant \#28157.
\\
The second author proved some of these results
 during the September 2012 Fields Institute Workshop
 on Forcing while being supported by the Fields Institute.
Work was also done by the second author while under NSF grant DMS-0943832.
}
\begin{document}

\begin{abstract}
We show that
 if $M$ is a countable transitive model
 of $\zf$ and
 if $a,b$ are reals not in $M$,
 then there is a $G$ generic over $M$
 such that $b \in L[a,G]$.
We then present several applications
 such as the following: if $J$ is any
 countable transitive model of $\zfc$
 and $M \not\subseteq J$
 is another countable transitive model of $\zfc$
 of the same ordinal height $\alpha$, then there is
 a forcing extension $N$ of $J$ such that
 $M \cup N$ is not included in any
 transitive model of $\zfc$ of height $\alpha$.
Also, assuming $0^\#$ exists,
 letting $S$ be the set of reals generic over $L$,
 although $S$ is disjoint from the Turing cone above $0^\#$,
 we have that for any non-constructible real $a$,
 $\{ a \oplus s : s \in S \}$ is cofinal in the Turing degrees.
\end{abstract}

\maketitle

\section{Introduction}

If $0^\#$ exists, it is not in any (set) forcing extension of $L$.
On the other hand,
 Mostowski showed that for any real $x$,
 there are reals $g_1, g_2$ both Cohen generic over $L$
 such that $x$ is computable from the Turing join of $g_1$ and $g_2$,
 written $x \le_T g_1 \oplus g_2$
 (see \cite{blockchains} for a proof).

In this paper we investigate the following question
 (assuming $0^\#$ exists):
 given an arbitrary $g_1 \in \baire$ not in $L$,
 is there a real $g_2$ generic over $L$ such that $0^\# \le_T g_1 \oplus g_2$?
We will see that the answer is yes.
Although there is a limit
 to what reals are generic over $L$,
 there is \textit{no limit} to what reals are constructible from
 a fixed non-constructible real and
 a real that is generic over $L$.
Here is the general formulation:
\begin{definition}
Let $M$ be a countable transitive model of $\zfc$.
Let $\mathbb{P} \in M$ be a poset.
A real $\bar{a} \in \baire$ is $(\mathbb{P},M)$-\defemp{helpful}
 iff for any $x \in \baire$,
 there is a $G$ that is $\mathbb{P}$-generic over $M$ such that
 $x \in L(\bar{a},G)$.
\end{definition}

Now fix a countable transitive model $M$ of $\zfc$.
Let $\mathbb{C}$ be Cohen forcing.
\begin{itemize}
\item[1)]
 Fix $\mathbb{P} \in M$.
 No real $\bar{a} \in M$ is $(\mathbb{P},M)$-helpful:
 if $x \in \baire$ codes the ordinal $\mbox{Ord} \cap M$,
 then $x \not\in L(\bar{a},G)$ for any
 $G$ that is $\mathbb{P}$-generic over $M$.
\item[2)]
 Every real Cohen generic over $M$
 is $(\mathbb{C},M)$-helpful
 (see Corollary 5.5 of \cite{blockchains}).
\item[3)] Miha Habi\u{c} (unpublished) and the first author
 (see \cite{hyperuniverse} just after ``nodes of compatibility'')
 have independently shown that
 every real unbounded over $M$
 is $(\mathbb{C},M)$-helpful.
\item[4)] The first author has shown that every real Sacks generic over $M$
 is $(\mathbb{C},M)$-helpful
 (unpublished).
\item[5)] The central result of this paper
 (Theorem~\ref{main_thm})
 is that \textit{every} real not in $M$
 is $(\mathbb{H},M)$-helpful,
 where $\mathbb{H}$ is ``Tree-Hechler'' forcing.
\item[6)] The question of whether every real
 not in $M$ is
 $(\mathbb{C},M)$-helpful remains open.
\end{itemize}

\begin{definition}
The forcing $\mbb{H}$, called Tree-Hechler forcing, consists of all
 trees $T \subseteq {^{<\omega}\omega}$ such that
 for all $t \sqsupseteq \mbox{Stem}(T)$ in $T$,
 $$\{ z \in \omega :
 t ^\frown z \not\in T \} \mbox{ is finite. }$$
The ordering is by inclusion.
\end{definition}

That is, a condition in Tree-Hechler forcing
 has cofinite splitting beyond its stem.

Consider a tree $T \subseteq \bairenodes$
 and a node $t \in T$.
By a \textit{successor} of $t$ we always mean
 some $t ^\frown z \in T$ for $z \in \omega$.
By $T \restriction t$ we mean the set of all
 $s \in T$ that are comparable to $t$.
$\mbox{Stem}(T)$ is the longest element of $T$
 that is comparable with all other elements of $T$.

Let $M$ be a transitive model of $\zf$ and
 suppose $G$ is $\mbb{H}^M$-generic over $M$.
Let $g = \bigcup \bigcap G$.
That is, $g : \omega \to \omega$
 is the union of all the stems
 of the trees $T \in G$.
The set $G$ can be recovered from $g$ (and $M$).
We will treat $g : \omega \to \omega$
 as the object which is encoding information.

The poset $\mbb{H}$ is $\sigma$-centered,
 because any two conditions with the same stem
 are comparable.
Thus, $\mbb{H}$ is c.c.c.
Combining this with the fact that
 $|\mbb{H}| = 2^\omega$,
 we have the following:
 there are only $2^\omega$ maximal antichains
 in $\mbb{H}$.
So, if $M$ is a transitive model of $\zfc$
 and $(2^\omega)^M$ is countable,
 then there is an $\mbb{H}^M$-generic
 over $M$.


The forcing $\mbb{H}$ is discussed in
 \cite{Palumbo}, along with other versions of Hechler forcing,
 where it is called $\mbb{D}_{tree}$.
A key ingredient for us is that
 $\mathbb{H}$ admits a ``rank analysis'' of its dense sets
 (see Definition~\ref{reachability} and
 Lemma~\ref{dense_implies_reachable}).
In \cite{even}, J\"org Brendle and Benedikt L\"owe
 carry out a rank analysis of $\mbb{H}$.
The original rank analysis of a Hechler-like forcing
 was done by James Baumgartner and Peter Dordal in
 \cite{baum} for the non-decreasing function version of Hechler forcing
 (although we discovered ``reachability'' independently of these works).

Here is our main result:
\begin{theorem}[Generic Coding with Help]
\label{main_thm}
Let $M$ be a transitive model of $\zf$ such that
 $\p^M(\mbb{H}^M)$ is countable.
Then given any $\bar{a}, x \in \baire$
 such that $\bar{a} \not\in M$,
 there is a $G$ that is $\mbb{H}^M$-generic over $M$
 such that
 $x \le_T \bar{a} \oplus (\bigcup \bigcap G)$.
\end{theorem}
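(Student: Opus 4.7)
The plan is to inductively build a descending sequence $T_0 \ge T_1 \ge \cdots$ in $\mbb{H}^M$ with properly extending stems $s_0 \sqsubsetneq s_1 \sqsubsetneq \cdots$ so that the generated filter $G$ is $\mbb{H}^M$-generic over $M$ and the real $g = \bigcup_n s_n$ has $x$ recursive in $\bar{a} \oplus g$. Because $\p^M(\mbb{H}^M)$ is countable in $V$, we fix in $V$ an enumeration $\langle A_n : n < \omega \rangle$ of the maximal antichains of $\mbb{H}^M$ lying in $M$, and at stage $n$ we will require $T_{n+1} \le a$ for some $a \in A_n$, securing genericity. After a standard Turing-equivalent recoding we may assume $x \in \cantor$.

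For the coding I plan to exploit the cofinite-branching structure of $\mbb{H}$: at every node $t$ above the stem of a condition, cofinitely many successors are in the condition, so when extending a stem I can require the new value to lie in any prescribed cofinite set, and in particular in any prescribed residue class modulo any fixed integer. The idea is to use $\bar{a}$ as a yardstick: at each stage $n$ designate a ``coding position'' $p_n$, namely the last position added in that stage, and set $g(p_n)$ so that $g(p_n)+\bar{a}(p_n)$ has a specific residue encoding $x(n)$, while at intermediate new positions set $g(p)$ so that the residue lies outside the coding pattern. A decoder then scans $\bar{a} \oplus g$ from left to right, locates positions matching the coding pattern, and reads off $x(n)$ from the $n$-th such position.

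The main obstacle is that to get $T_{n+1} \le a$ for some $a \in A_n$ one must pass to a common refinement of $T_n$ and $a$, whose stem is forced to match $\text{Stem}(a)$ on the interval $[|s_n|, |\text{Stem}(a)|)$; at these ``forced'' positions the residues against $\bar{a}$ are not under our control and some may accidentally fall in the coding pattern, confusing the decoder. My plan for handling this is a pre-extension step: before selecting $a$, pick $t \sqsupseteq s_n$ in $T_n$ whose new values on $[|s_n|, |t|)$ all lie outside the coding pattern against $\bar{a}$, then select $a \in A_n$ compatible with $T_n \restriction t$ such that $\text{Stem}(a) \sqsubseteq t$, so that no forced range remains, and finally place the coding mark at position $|t|$, yielding $T_{n+1}$. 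The hardest step will be proving the pre-extension lemma that such $t$ and $a$ always exist, and this is precisely where the hypothesis $\bar{a} \notin M$ is essential: an $M$-definable obstruction to finding a suitable pre-extension would amount to an $M$-predictable constraint on $\bar{a}$'s values, contradicting $\bar{a} \notin M$. Making this precise, presumably via a diagonalization in $V$ over a countable enumeration of $(\baire)^M$ combined with the $M$-density of $\{T' : T' \le a \text{ for some } a \in A_n\}$ below $T_n$, will be the technical heart of the argument.
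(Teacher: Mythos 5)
Your overall architecture is the right one, and it matches the paper's: alternate between extensions that meet the $M$-dense sets without leaving coding marks and single-step stem extensions that deposit a mark encoding the next bit of $x$. The fatal problem is your coding alphabet. If a mark at position $p$ means ``$g(p)+\bar{a}(p)$ lies in a designated residue class mod $k$,'' then the set of values $z$ that would register as a mark at $p$ is a residue class mod $k$ (shifted by $\bar{a}(p)$), hence literally one of the $k$ residue classes, each an infinite set belonging to $M$. Consequently, for each $N$ and each $j<k$, the set $\mc{D}_{N,j}$ of conditions whose stem takes a value $\equiv j \pmod{k}$ at some position $\ge N$ is open dense and lies in $M$ (extend the stem past $N$ and then one more step into the class $j$, which is possible since cofinitely many successors remain). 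Genericity forces you to meet all of these, so $g$ hits every residue class at infinitely many positions you did not choose, producing infinitely many spurious marks that the decoder cannot distinguish from genuine ones. No pre-extension trick can avoid this: entering the forbidden class is forced by the dense set itself, not by the accident of which antichain member you land below. For the same reason your proposed route to the pre-extension lemma cannot work --- only $\bar{a} \bmod k$ enters your pattern, and that carries no information about whether $\bar{a} \in M$, so the hypothesis $\bar{a} \notin M$ is never genuinely engaged.

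What is needed, and what the paper does, is to make the set of ``marked values'' \emph{evasive}: by Fact~\ref{comp_from_inf_subsets} choose $A \subseteq \omega$ with $A =_T \bar{a}$ and $A$ computable from every infinite subset of itself; since $\bar{a} \notin M$, $A$ has no infinite subset in $M$. This is the one place $\bar{a} \notin M$ is used. A mark at position $p$ is then ``$g(p) \in A$,'' and \emph{which} element of $A$ it is encodes the bit (via $\eta_A$). Evasiveness yields the Sticking Out Lemma ($B - A$ is infinite for every infinite $B \in M$), and a rank analysis on the set of stems of members of a dense $\mc{D} \in M$ shows that from any condition one can reach a member of $\mc{D}$ by a stem extension all of whose new values avoid $A$: at each step of the well-founded descent, the menu of rank-decreasing successors is an infinite set in $M$, hence is not contained in $A$. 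That is the correct form of your pre-extension lemma. Note also that your version --- first pick a safe $t$, then find $a \in A_n$ with $\mbox{Stem}(a) \sqsubseteq t$ --- is false as stated, since every member of the antichain may have stem properly extending $t$; the choice of safe values must be interleaved with the descent toward the dense set rather than made in advance, which is exactly what the rank analysis accomplishes.
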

Here, $\bar{a}$ is the ``help'' which is being used
 to code $x$.
Theorem~\ref{main_thm}
 has several interesting applications,
 which we will present first.
Then for completeness we will include
 a proof of Theorem~\ref{main_thm}.

One striking application is
 Theorem~\ref{amalg_thm},
 which shows that given two distinct
 countable transitive models $M,J$ of $\zfc$
 of the same height
 (meaning $\mbox{Ord} \cap M = \mbox{Ord} \cap J$),
 there is a forcing extension of one
 which does not amalgamate with the other
 (where two models of the same height $\alpha$
 are said to amalgamate iff they are both included
 in a countable transitive model $W$ of $\zfc$
 of height $\alpha$).
This answers a question posed by the first author \cite{hyperuniverse}
 concerning the Hyperuniverse Program:
 the model $L_\alpha$ is the only node of compatibility of height $\alpha$
 (see our discussion after Corollary~\ref{no_notrivial_nodes_of_combat}).

Theorem~\ref{main_thm} is a consequence of
 Lemma~\ref{main_lemma}, the ``Main Lemma''.
However, this was not the Main Lemma's original purpose
 in the literature.
This lemma originated from
 the second author's thesis \cite{Hathaway3}
 where it appeared in a game theoretic form
 that does not explicitly refer to forcing.
In that version,
 Players I and II play to build a descending
 sequence through $\mathbb{H}$,
 where Player I makes $\le$-extensions but
 Player II makes $\le_A$-extensions (to be defined later).
The goal of this game was to prove
 results like Proposition~\ref{original_application}.
In \cite{Hathaway1} and \cite{Hathaway2}
 such results are proved,
 and the current version of the Main Lemma
 appears in \cite{Hathaway1}.
We want to emphasize that the Main Lemma may have
 applications other than
 Theorem~\ref{main_thm} and
 Proposition~\ref{original_application}.

After this paper was published, we
 observed that our theorem has an application
 to coding the universe.
It was known that the universe can be coded
 by a real that is class generic over $V$
 (see \cite{coding_the_universe}).
By combining both our theorem and that
 coding the universe result,
 it follows that for any inner model $M$
 and for any set of ordinals $\bar{a}$ not in $M$,
 we have $V \subseteq L[G][\bar{a}]$ for some $G$
 (in a class forcing extension of $V$)
 that is \textit{set} generic over $M$.
We add this result in an addendum at
 the end of this paper.


\section{Amalgamation failure for C.T.M.'s}

The Generic Coding with Help Theorem
 implies in a strong way that c.t.m.'s
 (countable transitive models)
 of $\zfc$
 of the same ordinal height
 cannot be amalgamated:

\begin{theorem}
\label{amalg_thm}
Let $J$ be a c.t.m.\ of $\zfc$
 of ordinal height $\alpha < \omega_1$.
Let $M \not\subseteq J$ be another c.t.m.\
 of $\zfc$ of height $\alpha$.
Then there is a forcing extension $N$ of $J$
 such that $M \cup N$ is not included in
 any c.t.m.\ of $\zfc$ of height $\alpha$.
\end{theorem}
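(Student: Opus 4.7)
The plan is to apply Theorem~\ref{main_thm} to $J$, using as help a real $\bar a \in M \setminus J$ and as target a real $x$ coding a well-ordering of $\omega$ of order type $\alpha$. The point of this choice of $x$ is that no transitive model $P$ of $\zfc$ of ordinal height $\alpha$ can contain $x$: by absoluteness of the Mostowski collapse, $P$ would compute from $x$ the ordinal $\alpha$, contradicting that the ordinal part of $P$ is exactly $\alpha$. So if we can arrange $x$ to be computable from anything in $M \cup N$, no height-$\alpha$ amalgamation can exist.

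The key preliminary step—and the main obstacle—is to extract a real $\bar a \in M \setminus J$ from the hypothesis $M \not\subseteq J$. I would pick $X \in M \setminus J$ of minimum rank $\beta$; by minimality $X \subseteq V_\beta^M = V_\beta^J$. If $\beta < \omega_1^M$, then $M$ contains a bijection of $\omega$ onto $\textnormal{tc}(\{X\})$, and this bijection transports the $\in$-relation on $\textnormal{tc}(\{X\})$ to a real $\bar a \in M$; if $\bar a$ lay in $J$, then $J$ could form the Mostowski collapse of $\bar a$ and recover $X \in J$, contradicting the choice of $X$. Hence $\bar a \in M \setminus J$ as required. The residual case $\beta \ge \omega_1^M$ is the delicate one—$M \setminus J$ need not contain a real outright—and would need either a preliminary forcing over $J$ that collapses $V_\beta^J$ to be countable (reducing to the previous case in the extension) or a variant of Theorem~\ref{main_thm} tolerating higher-rank ``help''.

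Once such a real $\bar a$ is in hand, the rest is routine. Since $J$ is countable in $V$, $\p^J(\mbb H^J)$ is countable, so Theorem~\ref{main_thm} applies with model $J$, help $\bar a$, and target $x$; we obtain $G$ that is $\mbb H^J$-generic over $J$ with $x \le_T \bar a \oplus g$, where $g = \bigcup \bigcap G$. Set $N = J[G]$, a forcing extension of $J$. If some transitive model $P$ of $\zfc$ of ordinal height $\alpha$ contained $M \cup N$, then $\bar a \in M \subseteq P$ and $g \in N \subseteq P$, so $x \in P$ by Turing reducibility; but then $\alpha$ would be an ordinal of $P$, contradicting that $P$ has ordinal height exactly $\alpha$.
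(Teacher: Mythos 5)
Your overall strategy matches the paper's, and both your opening observation (a real coding a well-ordering of type $\alpha$ cannot lie in any transitive height-$\alpha$ model) and your closing contradiction are fine. The problem is exactly where you flag it: the case in which $M \setminus J$ contains no real is not a residual corner case but the essential content of the theorem (for instance, $M$ and $J$ can be mutually generic extensions of a common c.t.m.\ by a countably closed forcing, so that $\mathbb{R}^M = \mathbb{R}^J$ while $M \not\subseteq J$), and your proposed fix does not work as stated. Collapsing $V_\beta^J$ over $J$ does not ``reduce to the previous case'': the collapse generic $g_0$ lives in $J[g_0]$, not in $M$, so $M$ acquires no new real and there is still no real in $M \setminus J[g_0]$ to serve as help. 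Worse, an arbitrary collapse generic might put the witness $X$ (equivalently, a real coding it) into $J[g_0]$, destroying the one hypothesis Theorem~\ref{main_thm} needs, namely that the help lies outside the ground model.

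The paper resolves both difficulties. First, it takes the witness to be a set of ordinals $x \subseteq \lambda$ with $x \in M - J$ (available because ZFC-models are determined by their sets of ordinals), and it chooses $g_0$ from a mutually $\mbox{Col}(\omega,\lambda)$-generic pair $g_0', g_0''$ over $J$: since $(J[g_0'] - J) \cap (J[g_0''] - J) = \emptyset$, at least one of the two satisfies $x \notin J[g_0]$. Second---and this is the idea your sketch is missing---the help real $\tilde{x}$ (the subset of $\omega$ induced from $x$ and the surjection $\omega \to \lambda$ coded by $g_0$) need not belong to $M$ at all. What matters is that $\tilde{x} \notin J[g_0]$, so that Theorem~\ref{main_thm} applies over the intermediate model $J[g_0]$ with help $\tilde{x}$, and that $\tilde{x}$ belongs to every transitive ZF-model containing both $x$ and $g_0$. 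Setting $N = J[g_0][g_1]$, any transitive $W \supseteq M \cup N$ of height $\alpha$ contains $x$ and $g_0$, hence $\tilde{x}$, hence (using $g_1$) the forbidden real $y$. The help is thus manufactured inside the would-be amalgam rather than extracted from $M$; once you adopt that viewpoint, your case split on the rank of the witness disappears entirely.
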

\begin{proof}
Fix $\lambda < \alpha$
 and $x \subseteq \lambda$ such that $x \in M - J$.
This is possible because
 $J$ and $M$ are models of $\zfc$
 and $M \not\subseteq J$.
That is, following the proof of
 Theorem 13.28 in \cite{Jech},
 first fix $X \in M - J$.
Now let $x \in M$ be a bounded subset of
 $\mbox{Ord} \cap M = \alpha$
 such that $X$ is in any transitive model of $\zfc$
 which contains $x$:
 such an $x$ can be formed by first bijecting
 the transitive closure $tc(\{X\})$
 of $\{X\}$ with an ordinal $\lambda' < \alpha$,
 and then encoding the binary relation $\in \restriction tc(\{X\})$
 as a subset of $\lambda' \times \lambda'$,
 and then encoding that binary relation by a single set
 $x \subseteq \lambda$ for some $\lambda < \alpha$.
Such an $x$ cannot be in $J$.

Let $g_0'$ and $g_0''$ be mutually
 $\mbox{Col}(\omega,\lambda)$-generic over $J$.
Since they are mutually generic,
 $J[g_0'] - J$ and $J[g_0''] - J$ are disjoint.
Let $g_0$ be one of $g_0'$ or $g_0''$ such that
 $x \not\in J[g_0]$.

Now $g_0$ codes a surjection from $\omega$ to $\lambda$.
Let $\tilde{x} \subseteq \omega$ be induced from
 this surjection and $x$.
By this we mean if $W$ is any transitive model
 of $\zfc$ with
 contains $g_0$, then $x \in W$ iff $\tilde{x} \in W$.
Now $\tilde{x} \not\in J[g_0]$.

Let $y \in \baire$ be a real that codes a well-ordering
 of $\omega$ of order type $\alpha$
 (so $y$ cannot be in any c.t.m.\
 of $\zfc$
 of height $\alpha$).
By Theorem~\ref{main_thm},
 let $g_1$ be $\mbb{H}^{J[g_0]}$-generic over $J[g_0]$
 such that
 $$y \le_T \tilde{x} \oplus (\bigcup \bigcap g_1).$$

Let $N = J[g_0][g_1]$.
Now suppose, towards a contradiction,
 that there is some transitive model
 $W \supseteq M \cup N$ of $\zfc$ of
 ordinal height $\alpha$.
Because $x \in M \subseteq W$
 and $g_0 \in N \subseteq W$,
 we have $\tilde{x} \in W$.
But also $g_1 \in N \subseteq W$,
 so $y \in W$, which is impossible.
\end{proof}

We say two c.t.m.'s $N,M$ of $\zfc$
 of height $\alpha$
 are \textit{compatible}
 iff there is a c.t.m.\ $W$ of $\zfc$
 of height $\alpha$ such that
 $N \cup M \subseteq W$.

\begin{cor}
\label{no_notrivial_nodes_of_combat}
Given any
 two distinct c.t.m.'s of $\zfc$ of the same height,
 there is a forcing extension of one that is not compatible with the other.
\end{cor}

The first author asked (see \cite{hyperuniverse})
 if for a given $\alpha < \omega_1$,
 whether $L_\alpha$ was the only c.t.m.\ of $\zfc$ of height $\alpha$
 that was compatible with every c.t.m.\ of $\zfc$ of height $\alpha$
 (that is, whether $L_\alpha$ was the only
 \textit{node of compatibility}
 of height $\alpha$
 in the Hyperuniverse).
Now we see the answer is yes:
 If $M \not= L_\alpha$ is a c.t.m.\ of $\zfc$ of height $\alpha$,
 then $M$ is not compatible with a certain forcing extension of $L_\alpha$.


\begin{remark}
\label{folklore}
Mostowski's result in the introduction
 was used by him for a result about amalgamation
 (see \cite{blockchains}):
Let $J$ be a c.t.m.\ of $\zf$
 of ordinal height $\alpha < \omega_1$.
Let $x$ be a real which codes $\alpha$.
Let $c_1, c_2$ be two reals Cohen generic over $J$
 such that $x \le_T c_1 \oplus c_2$.
Then $J[c_1]$ and $J[c_2]$
 are not compatible.
\end{remark}

\section{A complex set disjoint from a Turing cone}

As mentioned before,
 if $0^\#$ exists
 (or even just $\omega_1$ is inaccessible in $L$),
 then given any real $x$,
 there are two Cohen generics $s_1, s_2$ over $L$
 such that $x \le_T s_1 \oplus s_2$.
So, let $S$ be the complement of the Turing cone above
 $0^\#$
 (the Turing cone above $a \in \baire$
 is the set of all $b \in \baire$ such that
 $b \ge_T a$).
Every real generic over $L$ is in $S$.
Now $S$ is small in one sense, because it is
 disjoint from a Turing cone.
But it is large in another sense, because
 $\{ s_1 \oplus s_2 : s_1, s_2 \in S\}$
 is cofinal in the Turing degrees.
We get a variation of this phenomenon 
 using
 the Generic Coding with Help Theorem (\ref{main_thm}).
Let $[x]$ denote the Turing degree of $x \in \baire$.

\begin{proposition}
Assume $0^\#$ exists.
Let $S \subseteq \baire$ be the set of all reals
 of the form $s = \bigcup \bigcap G$ for some
 $G$ that is $\mbb{H}^L$-generic over $L$.
The set $S$ is disjoint from the Turing cone above $0^\#$.
On the other hand
 for any real $\bar{a}$ not in $L$,
 the set $S^* := \{ [\bar{a} \oplus s] : s \in S \}$
 is cofinal in the Turing degrees.

Also, if $x$ is any real such that $x \ge_T \bar{a}$
 and $x$ computes a length $\omega$ enumeration
 of $\mathbb{R} \cap L$,
 then $[x] \in S^*$
 (so $S^*$ contains a Turing cone).
\end{proposition}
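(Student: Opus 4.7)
The plan is to separate the two assertions and attack them with disjoint tools: a no-new-$0^\#$ argument for the first, and a direct invocation of Theorem~\ref{main_thm} for the second. Both steps use the existence of $0^\#$, but in complementary ways.

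First I would handle disjointness. Fix $s \in S$, so $s = \bigcup \bigcap G$ for some $\mbb{H}^L$-generic $G$ over $L$. Since $G$ is recoverable from $s$ and $L$ (as noted in the introduction), $L[s] = L[G]$ is a set-generic extension of $L$ for a forcing in $L$. I would then cite the standard fact that no set-generic extension of $L$ contains $0^\#$; hence $0^\# \notin L[s]$, and in particular $0^\# \not\le_T s$. So $s$ lies outside the cone above $0^\#$.

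Next, for cofinality, let an arbitrary real $x$ be given. The plan is to apply Theorem~\ref{main_thm} with $M = L$, help-real $\bar{a}$, and target $x$. The step that requires a little care is verifying the hypothesis that $\p^L(\mbb{H}^L)$ is countable in $V$: this is where $0^\#$ is used on the positive side. Since $0^\#$ exists, every $L$-cardinal below $\omega_1^V$ is countable in $V$; because $|\mbb{H}|^L = \aleph_1^L$ and GCH holds in $L$, the set $\p^L(\mbb{H}^L)$ has $L$-cardinality $\aleph_2^L$, which is countable in $V$. Theorem~\ref{main_thm} then produces a $\mbb{H}^L$-generic $G$ over $L$ with $x \le_T \bar{a} \oplus (\bigcup \bigcap G)$; setting $s := \bigcup \bigcap G \in S$ yields $\bar{a} \oplus s \in S^*$ with $x \le_T \bar{a} \oplus s$. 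As $x$ was arbitrary, $S^*$ is cofinal in the Turing degrees.

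The only real obstacle is checking the countability hypothesis of Theorem~\ref{main_thm}, and that follows immediately from the fact that $0^\#$ collapses the small $L$-cardinals. Thus the existence of $0^\#$ plays a double role: it blocks generic addition of $0^\#$ (giving disjointness from the cone), while also making $L$ thin enough in $V$ to admit the coding theorem. Everything else is assembly.
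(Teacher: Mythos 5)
Your proof is correct and follows essentially the same route as the paper: disjointness from the cone via the standard fact that no set-generic extension of $L$ contains $0^\#$, and cofinality of $S^*$ by a direct application of Theorem~\ref{main_thm} with $M = L$. Your explicit verification that $\p^L(\mbb{H}^L)$ is countable in $V$ (using that $0^\#$ makes $\aleph_2^L$ countable) is a detail the paper leaves implicit, and it is a welcome addition.
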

\begin{proof}
It is well known that no generic extension of $L$
 contains $0^\#$.
Hence, $0^\#$ is not Turing reducible to any
 $s = \bigcup \bigcap G$ for a $G$ that is
 $\mbb{H}^L$-generic over $L$.
That is, $S$ is disjoint from the Turing cone
 above $0^\#$.

Now fix a real $\bar{a}$ not in $L$.
Pick any $x \in \baire$.
By Theorem~\ref{main_thm}
 there is some $G$ that is $\mbb{H}^L$-generic over $L$
 such that letting $s = \bigcup \bigcap G$,
 we have $x \le_T \bar{a} \oplus s$.
Hence, $S^*$ is cofinal in the Turing degrees.

For the last part,
 again fix a real $\bar{a} \not\in L$
 and let $x \ge_T \bar{a}$ be a real which
 computes a length $\omega$ enumeration of $\mathbb{R} \cap L$.
There is some $G$ that is $\mathbb{H}^L$-generic over $L$
 such that letting $s = \bigcup \bigcap G$,
 we have $x \le_T \bar{a} \oplus s$.
However,
 by the proof of Theorem~\ref{main_thm},
 fix an $s$ like this that
 can be built using $\bar{a}$, $x$,
 and a length $\omega$ enumeration of $\mathbb{R} \cap L$
 (using that the dense subsets of $\mathbb{H}^L$ in $L$
 are coded by reals in $L$).
So we can have $s \le_T \bar{a} \oplus x$.
We now have
 $$x \le_T
 \bar{a} \oplus s \le_T
 \bar{a} \oplus (\bar{a} \oplus x) \le_T
 \bar{a} \oplus x =
 x,$$
 so $x =_T \bar{a} \oplus s$,
 and so $[x] \in S^*$.
\end{proof}

\section{Larger sets are generically generic}

The Generic Coding with Help Theorem shows that
 reals not in $M$ are ``helpful''.
The following theorem shows that \textit{any set of ordinals
 not in $M$
 is helpful}, provided $M$ contains the supremum of the
 set of ordinals and that we pass to an outer model of $V$
 in which a large enough cardinal has become countable.

\begin{theorem}
\label{generically_generic}
Let $M$ be a transitive model of $\zf$.
Let $\lambda$ be a cardinal such that $\lambda \in M$.
Let $\mbb{P} = (\mbox{Col}(\omega,\lambda) * \mbb{H})^M$.
Let $\tilde{V}$ be an outer model of $V$ in which
 $\p^M(\mbb{P})$ is countable.
Let $X \in \p^{\tilde{V}}(\lambda)$.
Let $A \in \p^{\tilde{V}}(\lambda) - M$.
Then there is a $G$ in $\tilde{V}$ such that
\begin{itemize}
\item[1)] $G$ is $\mbb{P}$-generic over $M$,
\item[2)] $X \in L(A,G)$.
\end{itemize}
\end{theorem}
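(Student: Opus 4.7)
The plan is to mirror the Section 2 amalgamation-failure argument inside $\mbb{P}$: first use a $\mbox{Col}(\omega,\lambda)^M$-generic $g_0$ to recode both $A$ and $X$ as reals $\bar A, \bar X \in \baire$, and then apply Theorem~\ref{main_thm} inside $M[g_0]$ to obtain a $\mbb{H}^{M[g_0]}$-generic $g_1$ with $\bar X \le_T \bar A \oplus (\bigcup \bigcap g_1)$. The desired $G$ will be $g_0 * g_1$.

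Working in $\tilde V$: since $\p^M(\mbb{P})$ is countable, so is $\p^M(\mbox{Col}(\omega,\lambda) \times \mbox{Col}(\omega,\lambda))$, and hence we may pick two mutually $\mbox{Col}(\omega,\lambda)^M$-generic filters $g_0', g_0''$ over $M$. As in the Section 2 proof, $M[g_0'] - M$ and $M[g_0''] - M$ are disjoint, so since $A \notin M$ at least one of $g_0', g_0''$ --- call it $g_0$ --- satisfies $A \notin M[g_0]$. Let $f : \omega \to \lambda$ be the surjection coded by $g_0$ and put $\bar A = \{n \in \omega : f(n) \in A\}$ and $\bar X = \{n \in \omega : f(n) \in X\}$. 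Then $\bar A \notin M[g_0]$, for otherwise $A = f[\bar A]$ would lie in $M[g_0]$. Now apply Theorem~\ref{main_thm} inside $\tilde V$ with ground model $M[g_0]$, help real $\bar A$, and target real $\bar X$; this is legitimate because every subset of $\mbb{H}^{M[g_0]}$ in $M[g_0]$ has a $\mbox{Col}(\omega,\lambda)^M$-name in $M$, so the family of such subsets injects into the countable set $\p^M(\mbb{P})$. We obtain $g_1$ that is $\mbb{H}^{M[g_0]}$-generic over $M[g_0]$ with $\bar X \le_T \bar A \oplus (\bigcup \bigcap g_1)$.

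Set $G = g_0 * g_1$; this is $\mbb{P}$-generic over $M$, giving (1). For (2), observe that $L(A,G)$ contains $A$ together with $g_0$ (the first coordinate of $G$), and therefore also $f$, the set $\bar A = \{n : f(n) \in A\}$, and $\bigcup \bigcap g_1$. By absoluteness of Turing reducibility, $\bar X \in L(A,G)$, and hence $X = f[\bar X] \in L(A,G)$, as required. The only delicate step is the mutual-generics separation that produces a $g_0$ with $A \notin M[g_0]$; once this is in hand the rest is a direct adaptation of the Section 2 argument, with the sole twist that the ``help'' real $\bar A$ must be extracted from $A$ only after the collapse has fixed a surjection $\omega \to \lambda$.
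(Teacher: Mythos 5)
Your proof is correct and follows essentially the same route as the paper's: pick a $\mbox{Col}(\omega,\lambda)^M$-generic $g_0$ (via the mutual-generics trick from Section 2) so that $A \notin M[g_0]$, recode $A$ and $X$ as reals using the collapse surjection, apply Theorem~\ref{main_thm} over $M[g_0]$, and set $G = g_0 * g_1$. You in fact supply slightly more justification than the paper does, which simply asserts the existence of such a $g_0$ and leaves implicit the countability of $\p^{M[g_0]}(\mbb{H}^{M[g_0]})$ in $\tilde{V}$.
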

\begin{proof}
Using the same mutual generic technique
 as in the second paragraph of
 the proof of Theorem~\ref{amalg_thm},
 let $g_0 \in \tilde{V}$
 be $\mbox{Col}(\omega,\lambda)$-generic over $M$
 so that $A \not\in M[g_0]$.
Let $\tilde{a} \in \baire$ be such that
 for every transitive model $N$ of $\zf$ such that
 $g_0 \in N$,
 we have
 $A \in N$ iff
 $\tilde{a} \in N$.
Now $\tilde{a} \not\in M[g_0]$.
Let $\tilde{x} \in \baire$ be such that
 for every transitive model $N$ of $\zf$ such that
 $g_0 \in N$,
 we have $X \in N$ iff
 $\tilde{x} \in N$.

Force over $M[g_0]$ by
 $\mbb{H}^{M[g_0]}$ to get $g_1$ so that
 $\tilde{x} \le_T \tilde{a} \oplus (\bigcup \bigcap g_1)$.
Let $G := g_0 * g_1$,
 so $G \in \tilde{V}$ is $\mbb{P}$-generic over $M$.
$L(A,G)$ is a model of $\zf$ and it contains
 $g_0$ and $A$, so it contains $\tilde{a}$.
It also contains $g_1$, therefore
 it contains $\tilde{x}$.
Since it contains $g_0$ and $\tilde{x}$,
 it contains $X$.
\end{proof}

Note that if $\lambda = \omega$ in the theorem above,
 then we can simply take $\mathbb{P}$ to be $\mathbb{H}^M$.

\section{Proof of Generic Coding with Help Theorem}

Theorem~\ref{main_thm} follows
 from the Main Lemma of
 \cite{Hathaway1}.
For completeness we give a full proof here.

\subsection{Evasiveness and the Sticking Out Lemma}

We will now start to prove the theorem.
This subsection helps to clarify
 how we use
 the hypothesis
 $\bar{a} \not\in M$.


\begin{definition}
Let $M$ be a transitive model of $\zf$.
A set $A \subseteq \omega$ is
 \textit{evasive with respect to $M$}
 iff it is infinite and it has no infinite subsets
 in $M$.
\end{definition}


\begin{fact}
\label{comp_from_inf_subsets}
Given any $\bar{a} \in \baire$,
 there is a set $A \subseteq \omega$
 such that $\bar{a} =_T A$ and
 $A$ is computable from every infinite
 subset of itself.
\end{fact}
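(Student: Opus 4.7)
My plan is to encode $\bar{a}$ into $A$ so that each element of $A$ on its own reveals an initial segment of $\bar{a}$ (not merely a single bit). Concretely, let $p_0 < p_1 < p_2 < \ldots$ enumerate the primes and set
$$a_n := \prod_{i=0}^{n} p_i^{\bar{a}(i) + 1},$$
and $A := \{a_n : n \in \omega\}$. This sequence is strictly increasing since each step multiplies by at least $2$, and from any single $a_n$ prime factorisation recovers both the length $n+1$ (as the number of distinct primes appearing) and the entire tuple $\bar{a}(0), \ldots, \bar{a}(n)$.

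The three required properties should then drop out. That $A \le_T \bar{a}$ is clear because the construction is computable from $\bar{a}$. That $\bar{a} \le_T A$ follows by listing $A$ in increasing order and factoring each element. The essential point is: given any infinite $B \subseteq A$, enumerating $B$ and factoring each element yields compatible initial segments of $\bar{a}$ of unbounded length, so $\bar{a} \le_T B$, and hence $A \le_T B$ via the first reduction.

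There is no real obstacle here — the prime-power coding automatically delivers both ``self-describing'' element-wise decoding and strict monotonicity. The crucial design choice, and the only nontrivial part, is to encode the whole initial segment $\bar{a} \restriction (n+1)$ into each element $a_n$ rather than merely encoding the single value $\bar{a}(n)$; this is precisely what allows an infinite but proper subset of $A$ to reconstruct all of $\bar{a}$, and hence all of $A$.
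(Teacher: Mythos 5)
Your construction is correct: because every exponent $\bar{a}(i)+1$ is at least $1$, each $a_n$ is genuinely self-describing via its prime factorization, so any infinite $B \subseteq A$ yields unboundedly long compatible initial segments of $\bar{a}$, giving $\bar{a} \le_T B$ and hence $A \le_T B$; the equivalence $A =_T \bar{a}$ is immediate since the sequence $(a_n)$ is strictly increasing and computable from $\bar{a}$. The paper states this Fact without any proof, and your prime-power coding of initial segments (rather than single bits) is exactly the standard argument one would supply for it.
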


Thus if $M$ is a transitive model of $\zf$
 and $\bar{a} \in \baire - M$,
 then if $A$ comes from the fact above,
 then $A$ is evasive with respect to $M$.

\begin{lemma}[Sticking Out Lemma]
\label{sticking_out_lemma}
Let $M$ be a transitive model of $\zf$.
Let $A \subseteq \omega$ be evasive with respect to $M$.
Then if $B \subseteq \omega$ is
 infinite and in $M$,
 then $B - A$ is infinite.
\end{lemma}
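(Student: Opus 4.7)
The plan is to argue by contradiction: suppose that $B \subseteq \omega$ is infinite, $B \in M$, and $B - A$ is finite. I will derive from this an infinite subset of $A$ lying in $M$, contradicting the evasiveness of $A$.

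First I would observe that every finite subset of $\omega$ belongs to any transitive model of $\zf$, since such a set is coded by a natural number. So from the assumption $B - A$ is finite, we get $B - A \in M$ for free, even though $A$ itself need not be in $M$.

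Next, I would form $B \cap A = B \setminus (B - A)$. Because this is a Boolean combination of $B \in M$ and $B - A \in M$, we have $B \cap A \in M$. And because $B$ is infinite while $B - A$ is finite, the set $B \cap A$ must be infinite. Thus $B \cap A$ is an infinite subset of $A$ that lies in $M$, contradicting the hypothesis that $A$ is evasive with respect to $M$.

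There is no real obstacle here; the proof is essentially a one-line set-theoretic manipulation exploiting that finite subsets of $\omega$ are absolute for transitive models of $\zf$. The only thing to watch is that we do not need $A \in M$ (and indeed, evasive sets typically are not in $M$) — the argument only uses that $B$ and the finite difference $B - A$ are in $M$, from which $B \cap A \in M$ follows.
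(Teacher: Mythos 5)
Your proof is correct and is essentially identical to the paper's: both assume $B - A$ is finite, note that then $B - A \in M$ and hence $B \cap A = B \setminus (B - A) \in M$ is an infinite subset of $A$ in $M$, contradicting evasiveness. Your explicit justification that finite subsets of $\omega$ are automatically in $M$ is a welcome detail the paper leaves implicit.
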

\begin{proof}
Assume towards a contradiction that
 $B - A$ is finite.
Then $B - A \in M$.
Since both $B$ and $B - A$ are in $M$,
 we have $B \cap A \in M$ as well.
At the same time,
 since $B$ is infinite
 and $B - A$ is finite,
 $B \cap A$ must be infinite.
So now we have shown that
 $B \cap A$ is an infinite subset of $A$
 that is in $M$,
 which contradicts $A$
 being evasive with respect to $M$.
\end{proof}


\subsection{Decoding an $x \in \baire$
 from an $\mbb{H}$ generic and an $A \subseteq \omega$}

Suppose $G$ is generic for $\mbb{H}$.
Recall that $g := \bigcup \bigcap G$
 is a function from $\omega$ to $\omega$.
The idea is to look at
 each $n \in \omega$ such that $g(n) \in A$.
Which element of $A$ this $g(n)$
 actually is will give us a piece of encoded
 information.
For each $n$ such that $g(n) \not\in A$,
 no information is being encoded.
Here is what we mean precisely:

\begin{definition}
Fix a computable function
 $\theta : \omega \to \omega$ such that
 $$(\forall m \in \omega)\, \theta^{-1}(m)
 \mbox{ is infinite. }$$
Given an infinite $A \subseteq \omega$,
 let $e_A : \omega \to A$ be the strictly increasing
 enumeration of $A$.
Let $\eta_A : A \to \omega$ be the function
 $\theta \circ e_A^{-1}$.
\end{definition}
Note that for each $m \in \omega$,
 $\eta_A^{-1}(m) \subseteq A$ is infinite.

\begin{definition}
Let $M$ be a transitive model of $\zf$.
Let $G$
 be $\mbb{H}^M$-generic over $M$.
Let $A \subseteq \omega$.

Then the real that is
 $A$-\defemp{encoded} by $G$ is
 $$\langle \eta_A( g(n_i) ) : i < \omega \rangle,$$
 where $g := \bigcup \bigcap G$ and
 $$n_0 < n_1 < ...$$
 is the increasing enumeration of the
 set of $n \in \omega$
 such that $g(n) \in A$.
However,
 if there are only finitely many such $n$'s,
 then the real $A$-encoded by $G$
 is the zero sequence.
\end{definition}

\begin{observation}
\label{complexity_obs}
Let $x \in \baire$ be the real
 $A$-encoded by $G$.
Then $$x \le_T A \oplus (\bigcup \bigcap G).$$
\end{observation}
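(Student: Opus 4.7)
The plan is to describe an explicit oracle Turing machine with oracle $A \oplus g$, where $g := \bigcup \bigcap G$, that outputs the bits of the $A$-encoded real $x$. The computation is built from two elementary subroutines that I combine at the end.

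First, the function $\eta_A = \theta \circ e_A^{-1}$ is computable from the oracle $A$: given $m \in A$, one queries $A$ at $0, 1, \ldots, m-1$ and sets $e_A^{-1}(m) := |\{k < m : k \in A\}|$, then applies the fixed total computable function $\theta$. Second, the increasing enumeration $n_0 < n_1 < \ldots$ of $\{n \in \omega : g(n) \in A\}$ is computable from $A \oplus g$: given $n_i$ (and starting from $-1$ for $i = 0$), search $n = n_i + 1, n_i + 2, \ldots$, use the $g$-oracle to obtain $g(n)$ and the $A$-oracle to test membership, and output the first hit. Combining the two subroutines yields $x(i) = \eta_A(g(n_i))$, computable from $A \oplus g$.

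The only delicate point I anticipate is that the search for $n_{i+1}$ is only guaranteed to terminate when infinitely many $n$ satisfy $g(n) \in A$. In the degenerate case where only finitely many do, however, the $A$-encoded real $x$ is by definition the zero sequence, which is computable with no oracle at all, so trivially $x \le_T A \oplus g$. A single case split on this possibility therefore completes the argument, and in particular no genericity or evasiveness hypothesis on $A$ or $G$ is actually required for the observation itself.
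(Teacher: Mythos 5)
Your proof is correct and is exactly the routine verification the paper has in mind: the paper states this as an Observation and omits the argument entirely, and your two subroutines (computing $\eta_A$ from $A$, and enumerating $\{n : g(n)\in A\}$ from $A\oplus g$) together with the non-uniform case split for the degenerate finite case are precisely what is intended. The non-uniformity of the case split is harmless since Turing reducibility only requires that some oracle machine work for the actual pair $(A,g)$ at hand.
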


\subsection{The stronger $\le_A$ ordering
 and the Main Lemma}

Given $A \subseteq \omega$,
 there is an ordering $\le_A$
 defined on $\mbb{H}$
 which is stronger than $\le$.
Intuitively, $T' \le_A T$ iff
 $T' \le T$
 and the stem of $T'$ does not ``hit'' $A$
 any more than the stem of $T$ already does:
\begin{definition}
Let $A \subseteq \omega$.
Then given $t,t' \in {^{<\omega} \omega}$, we write
 $t' \sqsupseteq_A t$
 iff $t' \sqsupseteq t$ and
 $$(\forall n \in \dom(t') - \dom(t))\, t'(n) \not\in A$$
\end{definition}
\begin{definition}
Let $A \subseteq \omega$.
Given $T, T' \in \mbb{H}$,
 we write $T' \le_A T$ iff
  $T' \le T$ and
  $\mbox{Stem}(T') \sqsupseteq_A \mbox{Stem}(T)$.
\end{definition}

The content of the Main Lemma soon to come
 is that as long as
 $A$ is evasive with respect to $M$,
 we can hit dense subsets of $\mbb{H}$
 (that are in $M$) by making $\le_A$ extensions.
So, we can construct a generic without being
 forced to encode unwanted information.
Hence, we can alternate between
 1) making $\le_A$
 extensions in order to build an $\mbb{H}$
 generic but not encoding any information and
 2) making non-$\le_A$ extensions to encode information.
We use a \textit{rank analysis}
 to prove the Main Lemma:
\begin{definition}
\label{reachability}
Given $S \subseteq {^{<\omega}\omega}$ and
 $t \in {^{<\omega}\omega}$,
\begin{itemize}
\item $t$ is $0$-$S$-reachable iff $t \in S$;
\item $t$ is $\alpha$-$S$-reachable for some $\alpha > 0$ iff
 $$\{ z \in \omega : t ^\frown z
 \mbox{ is $\beta$-$S$-reachable for some }
 \beta < \alpha \}$$
 is infinite;
\item $t$ is $S$-reachable iff $t$ is
 $\alpha$-$S$-reachable for some $\alpha$.
\end{itemize}
\end{definition}

Notice that if $t$ is not $S$-reachable,
 then only a finite set of successors of
 $t$ can be $S$-reachable.

\begin{lemma}
\label{dense_implies_reachable}
Let $\mc{D} \subseteq \mbb{H}$ be dense.
Let $$S = \{ s \in {^{<\omega}\omega} :
 (\exists T \in \mc{D})\, \mbox{Stem}(T) = s \}.$$
Fix $t \in {^{<\omega}\omega}$.
Then $t$ is $S$-reachable.
\end{lemma}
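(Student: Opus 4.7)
The plan is to argue by contradiction. Suppose some fixed $t \in {^{<\omega}\omega}$ is not $S$-reachable; I will construct a condition $T \in \mbb{H}$ with stem exactly $t$ that is incompatible with every element of $\mc{D}$, contradicting the density of $\mc{D}$.

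First I would define $T$ to consist of all initial segments of $t$ together with all $s \sqsupseteq t$ such that every node $s'$ satisfying $t \sqsubseteq s' \sqsubseteq s$ fails to be $S$-reachable. By construction $T$ is closed under initial segments, so it is a tree, and $t \in T$ since $t$ itself is not $S$-reachable. Next I would verify that $T \in \mbb{H}$ with $\mbox{Stem}(T) = t$. For any $s \in T$ with $s \sqsupseteq t$, the node $s$ is not $S$-reachable, so by the remark immediately preceding the lemma only finitely many successors $s ^\frown z$ are $S$-reachable; hence cofinitely many $s ^\frown z$ belong to $T$. This gives the Hechler branching condition above $t$, while below $t$ the tree is just a single branch, so indeed $\mbox{Stem}(T) = t$.

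Finally, by density of $\mc{D}$ there is some $T^* \in \mc{D}$ with $T^* \le T$, i.e.\ $T^* \subseteq T$. The stem of $T^*$ lies in $T^* \subseteq T$ and, as the longest common initial segment of all nodes of $T^*$, must extend $\mbox{Stem}(T) = t$. By the definition of $S$ we have $\mbox{Stem}(T^*) \in S$, which makes $\mbox{Stem}(T^*)$ a $0$-$S$-reachable node. But the defining property of $T$ above $t$ forces $\mbox{Stem}(T^*)$ to fail to be $S$-reachable, a contradiction.

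The only delicate step is the verification that $T \in \mbb{H}$; this rests entirely on the observation that a non-$S$-reachable node has only finitely many $S$-reachable successors. That observation is stated just before the lemma and follows directly from the inductive definition of $\alpha$-$S$-reachability---if infinitely many successors of $t$ were $S$-reachable, then taking any ordinal strictly above the suprema of their reachability ranks would witness that $t$ itself is $S$-reachable. Everything else is straightforward bookkeeping.
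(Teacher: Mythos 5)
Your proof is correct and takes essentially the same approach as the paper's: both assume $t$ is not $S$-reachable, build a condition in $\mbb{H}$ with stem $t$ all of whose nodes extending $t$ are non-$S$-reachable (using the observation that a non-reachable node has only finitely many reachable successors), and contradict the density of $\mc{D}$. The only cosmetic difference is that you define this tree explicitly in one step (and spell out why the stem of a $T^* \le T$ in $\mc{D}$ must extend $t$), whereas the paper constructs the same tree level by level.
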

\begin{proof}
Assume that some fixed $t$ is not $S$-reachable.
We will construct a tree $T \in \mbb{H}$
 with stem $t$ such that no $s \sqsupseteq t$ in $T$
 is in $S$.
Hence, no $T' \le T$ can be in $\mc{D}$.

There is only a finite set of $z \in \omega$
 such that $t ^\frown z$ is $S$-reachable.
Let the successors of $t$ in $T$ be those $t ^\frown z$
 that are not $S$-reachable.
Now for each $t ^\frown z_0$ in $T$,
 there is only a finite set of $z \in \omega$ such that
 $t ^\frown z_0 ^\frown z$ is $S$-reachable.
Let the successors of each $t ^\frown z_0$ in $T$ be those
 $t ^\frown z_0 ^\frown z$ that are not $S$-reachable.
Continuing this procedure $\omega$ times
 yields a tree $T$ such that all
 $s \sqsupseteq t$ in $T$
 are not $S$-reachable.
In particular no $s \sqsupseteq t$ in $T$
 is in $S$.
\end{proof}

\begin{lemma}[Main Lemma]
\label{main_lemma}
Let $M$ be a transitive model of $\zf$.
Let $A \subseteq \omega$ be evasive with respect to $M$.
Let $\mbb{P} = \mbb{H}^M$.
Let $\mc{D} \in \p^M(\mbb{P})$ be open dense
 (in $M$).
Let $T \in \mbb{P}$.
Then there exists some $T' \le_A T$ in $\mc{D}$.
\end{lemma}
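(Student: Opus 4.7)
The plan is to reduce the lemma to finding a single node $s$ that (i) extends $\mbox{Stem}(T)$ through values outside $A$, (ii) lies in $T$, and (iii) is the stem of some condition in $\mc{D}$. With $S := \{\mbox{Stem}(T'') : T'' \in \mc{D}\} \in M$ as in the preceding lemma, once such an $s$ is found, pick any $T'' \in \mc{D}$ with $\mbox{Stem}(T'') = s$ and set $T' := T'' \cap (T \restriction s)$. Both $T''$ and $T \restriction s$ have stem $s$ and lie in $\mbb{H}$, so every node above $s$ has cofinitely many successors in each and hence cofinitely many in their intersection; thus $T' \in \mbb{H}$ with stem $s$, $T' \subseteq T$, and $T' \subseteq T''$. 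Openness of $\mc{D}$ yields $T' \in \mc{D}$, and $s \sqsupseteq_A \mbox{Stem}(T)$ gives $T' \le^A T$, as required.

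To produce $s$, I would define in $M$ the $S$-rank $\rho(u)$ of each $S$-reachable node $u$ as the least $\alpha$ for which $u$ is $\alpha$-$S$-reachable. The defining recursion is absolute, so $\rho \in M$, and the preceding lemma guarantees $\rho(\mbox{Stem}(T))$ is defined. I would then build in $V$ a finite sequence $s_0 = \mbox{Stem}(T), s_1, \ldots, s_k$ in which each $s_{i+1}$ is an immediate successor of $s_i$ in $T$ with $s_{i+1}(|s_i|) \not\in A$ and $\rho(s_{i+1}) < \rho(s_i)$. Ordinal descent forces termination at some $s_k$ with $\rho(s_k) = 0$, so $s := s_k \in S$ satisfies (i)--(iii).

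The crux, and the only point at which evasiveness of $A$ enters, is justifying the step $s_i \mapsto s_{i+1}$ whenever $\rho(s_i) > 0$. Consider, in $M$, the set $B_i$ of those $z \in \omega$ such that $s_i ^\frown z \in T$ and $s_i ^\frown z$ is $\beta$-$S$-reachable for some $\beta < \rho(s_i)$. The reachability clause produces infinitely many $z$ by the very definition of $\rho(s_i)$-$S$-reachability, while the condition $s_i ^\frown z \in T$ removes only finitely many $z$ because $T \in \mbb{H}$; hence $B_i$ is an infinite element of $M$. Since $A$ is evasive with respect to $M$, the Sticking Out Lemma then gives that $B_i - A$ is infinite, and any $z \in B_i - A$ supplies $s_{i+1}$. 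The principal obstacle to be handled is precisely the simultaneous fit of these three demands---staying in $T$, strictly decreasing $\rho$, and missing $A$---and it is exactly resolved by the interplay of $\mbb{H}$-cofiniteness, ordinal descent, and the Sticking Out Lemma.
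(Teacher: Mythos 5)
Your proposal is correct and follows essentially the same route as the paper's own proof: reduce to finding a node $s \sqsupseteq_A \mbox{Stem}(T)$ in $T \cap S$, then descend through the $S$-reachability ranks, using the Sticking Out Lemma at each step to choose a successor avoiding $A$. Your write-up is in fact slightly more careful on two points the paper leaves implicit --- that the set $B_i$ (with the membership-in-$T$ requirement folded in) is an infinite set lying in $M$, and that a fresh such set is formed at each stage of the descent --- but these are refinements of the same argument, not a different one.
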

\begin{proof}
Let $t = \mbox{Stem}(T)$.
Let $$S = \{ s \in {^{<\omega}\omega} :
 (\exists T' \in \mc{D})\, \mbox{Stem}(T') = s \}.$$
If we can find a $s \sqsupseteq_A t$ in $T \cap S$,
 then letting $T' \in \mc{D}$ be such that
 $\mbox{Stem}(T') = s$ and letting
 $T'' \le T$ be $T'' = T \restriction s$,
 then $T' \cap T''$ is in $\mc{D}$
  (because $\mc{D}$ is open),
 and $\mbox{Stem}(T' \cap T'') = s$ so
 $T' \cap T'' \le_A T$.
Hence, we will be done.

Now by the previous lemma,
 fix some ordinal $\alpha$ such that
 $t$ is $\alpha$-$S$-reachable.
If $\alpha = 0$ we are done, so assume $\alpha > 0$.
The set
 $$B = \{ z \in \omega : t ^\frown z
 \mbox{ is $\beta$-$S$-reachable for some }
 \beta < \alpha \}$$
 is infinite and in $M$.
Since $A$ is evasive with respect to $M$,
 $B - A$ must be infinite by the Sticking Out Lemma
 (Lemma~\ref{sticking_out_lemma}).
Thus, we may fix some
 $z_0 \in (B - A)$ such that $t ^\frown z_0 \in T$.

Now $t ^\frown z_0$ is $\beta$-$S$-reachable for some
 fixed $\beta < \alpha$.
If $\beta = 0$ we are done, and otherwise we may
 find some $z_1 \in (B-A)$,
 for an appropriately redefined $B$,
 such that $t ^\frown z_0 ^\frown z_1 \in T$ and
 $t ^\frown z_0 ^\frown z_1$ is $\gamma$-$S$-reachable
 for some $\gamma < \beta$.
We may continue like this but eventually we will have some
 $t ^\frown z_0 ^\frown ... ^\frown z_n$
 that is in $S$.
\end{proof}

\subsection{Proof of Generic Coding with Help Theorem}


\begin{proof}[Proof of Theorem~\ref{main_thm}]
Let $M$ be a transitive model of $\zf$.
Let $\mbb{P} = \mbb{H}^M$
 and assume $\p^M(\mbb{P})$ is countable.
Let $x \in \baire$.
Let $\bar{a} \in \baire - M$.
By Fact~\ref{comp_from_inf_subsets},
 fix $A \subseteq \omega$ such that
 $A =_T \bar{a}$ and $A$ is computable from every
 infinite subset of itself.
Then $A$ is evasive with respect to $M$.
It suffices to find a $\mbb{P}$-generic $G$ over $M$
 such that $x \le_T A \oplus (\bigcup \bigcap G)$.
By Observation~\ref{complexity_obs},
 it suffices to find a $\mbb{P}$-generic $G$ over $M$
 such that $x$ is the real $A$-encoded by $G$.

Since $\p^M(\mbb{P})$ is countable,
 let $\langle \mc{D}_i : i < \omega \rangle$
 be an enumeration
 of the open dense subsets of $\mbb{P}$ in $M$.
We will construct a decreasing $\omega$-sequence
 $$T_0 \ge T_1 \ge ...$$
 of $\mbb{P}$-conditions
 such that each $T_i \in \mc{D}_i$.
Hence $$G := \{ T \in \mbb{P} :
 (\exists i)\, T \ge T_i \}$$
 will be $\mbb{P}$-generic over $M$.
On the other hand, we will construct the sequence
 of conditions so that $x$ is the real $A$-encoded by $G$.

Since $A$ is evasive with respect to $M$,
 by Lemma~\ref{main_lemma}
 (the Main Lemma),
 let $T_0 \le_A 1_\mbb{P}$
 be such that $T_0 \in \mc{D}_0$.
Now we will encode $x(0)$:
 let $T_0' \le T_0$ be a non-$\le_A$ extension
 of $T_0$,
 extending the stem of $T_0$ by one, such that
 $\mbox{Stem}(T_0') = \mbox{Stem}(T_0) ^\frown z$
 for a $z \in A$ such that
 $\eta_A(z) = x(0)$.
This is possible because
 $$\{ z \in A : \eta_A(z) = x(0) \}$$
 is infinite, and so must intersect
 $$\{ z \in \omega :
 \mbox{Stem}(T_0)^\frown z \in T_0 \}.$$

Next, let $T_1 \le_A T_0'$ be such that
 $T_1 \in \mc{D}_1$.
Then, let $T_1' \le T_1$ be such that
 $\mbox{Stem}(T_1') = \mbox{Stem}(T_1) ^\frown z$
 for a $z \in A$ such that $\eta_A(z) = x(1)$.

Continuing this $\omega$ times,
 we see that $x$ is the real $A$-encoded
 by $G$.
That is, let $g := \bigcup \bigcap G$.
The only $n$'s such that
 $g(n) \in A$ come from when we made non-$\le_A$
 extensions.
And, if $n_0 < n_1 < ...$
 is the strictly increasing enumeration
 of these $n$'s,
 then we see that $\eta_A(g(n_i)) = x(i)$ for each $i$.
\end{proof}

\subsection{Another application of the Main Lemma}

As described in the introduction,
 here is the original kind of result for which the
 Main Lemma was created.
A proof can be found in \cite{Hathaway2}.
\begin{proposition}
\label{original_application}
Assume $\ad^+$.
Fix $a \in \baire$.
Then there is a Borel
 (in fact, Baire class one)
 function $f_a : \baire \to \baire$
 such that whenever
 $g : \baire \to \baire$ is a function
 whose graph is disjoint from $f_a$,
 then $$a \in L[C]$$
 where $C \subseteq \mbox{Ord}$
 is any $\infty$-Borel code for $g$.
\end{proposition}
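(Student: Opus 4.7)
The plan is to let $f_a$ be the ``$A$-decoding'' map from the proof of Theorem~\ref{main_thm}, and then show by contrapositive that every $g$ whose graph is disjoint from $f_a$ must satisfy $a \in L[C]$ for every $\infty$-Borel code $C$ of $g$. First I would use Fact~\ref{comp_from_inf_subsets} to fix $A \subseteq \omega$ with $A =_T a$ and with $A$ computable from every infinite subset of itself, so that $A$ is evasive with respect to any transitive model of $\zf$ not containing $a$. Then I would define $f_a : \baire \to \baire$ by $f_a(x)(i) := \eta_A(x(n_i))$, where $n_0 < n_1 < \cdots$ enumerates $\{ n \in \omega : x(n) \in A \}$, with $f_a(x)(i) := 0$ if fewer than $i+1$ such $n$ exist. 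This $f_a$ is Baire class one because the continuous approximations $f_a^{(k)}$, which read only $x \restriction k$ and pad with zeros where the $i$-th hit into $A$ has not yet been seen, converge pointwise to $f_a$.

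Now suppose $g : \baire \to \baire$ has graph disjoint from $f_a$ with $\infty$-Borel code $C$; assume for contradiction $a \notin L[C]$, so $A$ is evasive with respect to $L[C]$. Using $\ad^+$, I would pass to a transitive $M$ with $C \in M$, $a \notin M$, and $\p^M(\mbb{H}^M)$ countable in $V$, chosen so that the $\infty$-Borel evaluation of the graph of $g$ from the parameters in $M$ is absolute between $M[G]$ and $V$ for any $\mbb{H}^M$-generic $G$ over $M$. This is the step that uses the $\ad^+$ hypothesis essentially, via the regularity and absoluteness properties of $\infty$-Borel representations. Inside $M$, the formula coded by $C$ then gives an $\mbb{H}^M$-name $\dot{y} = \dot{g}(\dot{g}_G)$ for a real.

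With this setup I would run a fusion $T_0 \ge T_0' \ge T_1 \ge T_1' \ge \cdots$ in $\mbb{H}^M$ along an enumeration $\langle \mc{D}_i : i < \omega \rangle$ of the open dense subsets of $\mbb{H}^M$ lying in $M$. At stage $i$, the Main Lemma (Lemma~\ref{main_lemma}), applied to the open dense subset of $\mc{D}_i$ consisting of conditions that additionally decide $\dot{y}(i)$, produces $T_i \le_A T_{i-1}'$ with $T_i \in \mc{D}_i$ and $T_i \forces \dot{y}(i) = m_i$ for some $m_i \in \omega$. Then $T_i'$ is obtained by extending the stem of $T_i$ by one step with some $z_i \in A$ having $\eta_A(z_i) = m_i$, which is possible because $\{ z \in A : \eta_A(z) = m_i \}$ is infinite while the set of legal successors of $\mbox{Stem}(T_i)$ in $T_i$ is cofinite. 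Setting $G := \{ T : (\exists i)\, T \ge T_i \}$ and $x := \bigcup \bigcap G$, by construction the real $A$-encoded by $x$ is $\langle m_i : i < \omega \rangle$; by absoluteness of the $\infty$-Borel formula for $g$, this equals $g(x)$. Hence $f_a(x) = g(x)$, contradicting the disjointness of the graphs. The main obstacle, as flagged, is producing the $M$ of the second paragraph: that is exactly where the leap from $\zf$ to $\ad^+$ is spent, and is the part I expect to require the most care in a full write-up.
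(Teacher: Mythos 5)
The paper does not actually prove this proposition: it appears in the subsection ``Other Applications of the Main Lemma'' with only the remark that the Main Lemma is used to prove it in the cited reference \cite{Hathaway2}. So there is no in-paper argument to compare against line by line; what can be said is that your architecture --- take $A =_T a$ computable from all its infinite subsets, let $f_a$ be the $A$-decoding map (which is indeed Baire class one by the pointwise-limit argument you give), and, assuming $a \notin L[C]$, run the Main Lemma fusion over $M = L[C]$ alternating between $\le_A$-extensions into $\mc{D}_i \cap \{T : T \text{ decides } \dot{y}(i)\}$ and one-step stem extensions into $\eta_A^{-1}(m_i)$ --- is exactly the strategy the paper signals, and the diagonalization producing $f_a(x_G) = g(x_G)$ is the right way to contradict disjointness. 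Two routine points you leave implicit do hold under $\ad^+$ and should be said: $L[C]$ is the correct choice of $M$ (evasiveness of $A$ with respect to $L[C]$ follows from $a \notin L[C]$ exactly as in the paper), and $\p^{L[C]}(\mbb{H}^{L[C]})$ is countable in $V$ because under $\ad$ every well-orderable family of reals is countable, so the generic can be built in $V$.

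The genuine gap is the one you flag but do not close: the existence and correctness of the name $\dot{y}$. For the fusion to work you need that for the generic real $x = \bigcup\bigcap G$ over $L[C]$, the value $g(x)$ actually lies in $L[C][G]$ and is computed there by evaluating the code $C$, and that this evaluation agrees with $V$. An $\infty$-Borel code for the graph gives absoluteness of the \emph{membership} relation $(x,y) \in \mathrm{graph}(g)$ between transitive models containing $C,x,y$, but it does not by itself put the witness $y$ inside $L[C][x]$; in the Borel case this is supplied by Shoenfield absoluteness, and in the $\infty$-Borel case it is a nontrivial consequence of $\ad^+$ that has to be proved, not merely ``chosen.'' Without that step the set of conditions deciding $\dot{y}(i)$ need not be dense (indeed $\dot{y}$ need not be forced to be total), and the fusion cannot start. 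So your proposal is the right skeleton, but the load-bearing $\ad^+$ input is still missing.
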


The function $(a,x) \mapsto f_a(x)$
 is Borel as well.

\section{HOD}

By Vop\u{e}nka's Theorem,
 every real is generic over $\hod$.
But one can ask if there is a single
 $\mathbb{P} \in \hod$ such that
 $(|\mathbb{P}| \le 2^\omega)^\hod$
 and every real is $\mathbb{P}$-generic over $\hod$.
This is relevant to our paper because
 by Theorem~\ref{generically_generic},
 if $\tilde{V}$ is an outer model of $V$ in which
 $\p^{\hod^V}(\mathbb{H}^{\hod^V})$ is countable,
 and $\bar{a} \in (\baire)^{\tilde{V}} - \hod^V$ is arbitrary,
 then for any $x \in (\baire)^{\tilde{V}}$,
 there is a $G$ that is $\mathbb{H}^{\hod^V}$-generic over $\hod^V$
 such that $x \in L(\bar{a},G)$.
So the question is whether the $\bar{a}$ can be removed.
The answer is no:
\begin{proposition}
It is consistent with $\zfc$ that
 there is a real $R$ that is not $\mathbb{P}$-generic
 over $\hod$ for any $\mathbb{P} \in \hod$ such that
 $(|\mathbb{P}| \le 2^\omega)^\hod$.
Moreover, this persists to any outer model of $V$.
That is, if $\tilde{V}$ is an outer model of $V$,
 then $R$ is not $\mathbb{P}$-generic over $\hod^V$
 for any $\mathbb{P} \in \hod^V$ such that
 $(|\mathbb{P}| \le 2^\omega)^{\hod^V}$.
\end{proposition}
\begin{proof}
Start with $L$.
Let $\mathbb{C}_{\omega_2} \in L$ be the forcing to add a Cohen subset of $\omega_2$.
Let $A \subseteq \omega_2$ be $\mathbb{C}_{\omega_2}$-generic over $L$.
Let $X \subseteq \omega_1$ be generic over $L[A]$
 by almost disjoint coding such that
 $A \in L[X]$.
Let $R \subseteq \omega$ be generic over $L[X]$
 by almost disjoint coding such that
 $X \in L[R]$.
So now $$L \subseteq L[A] \subseteq L[X] \subseteq L[R]$$
 and $\p(\omega_1)^L = \p(\omega_1)^{L[A]}$
 (because $\mathbb{C}_{\omega_2}$ is
 ${<}\omega_2$-closed).
Let $H = \hod^{L[R]}$.
We will show that $L[R]$ satisfies that $R$ is not generic over
 $H$ by any forcing of size $(2^\omega)^H$.
Moreover, fix any outer model $N$ of $L[R]$.
We will show that $N$ satisfies that $R$ is not generic over
 $H$ by any forcing of size $(2^\omega)^H$.

The forcing $\mathbb{Q}$ to go from $L[A]$ to $L[R]$
 is weakly homogeneous \cite{ad_code}.
This is subtle, because a three step iteration of almost disjoint coding,
 to code a subset of $\omega_3$ into a subset of $\omega$,
 may $\textit{not}$ be weakly homogeneous \cite{ad_code}.
Now because $\mathbb{Q} \in L[A]$ is weakly homogeneous, $H \subseteq L[A]$.
 
Since $L \subseteq H \subseteq L[A]$
 and $\p(\omega_1)^L = \p(\omega_1)^{L[A]}$,
 we have $\omega_1^L = \omega_1^H = \omega_1^{L[A]}$
 and $H$ satisfies $\ch$.
Suppose towards a contradiction that there is some $\mathbb{P} \in H$
 such that $R$ is in a generic extension of $H$ by $\mathbb{P}$
 (meaning there is some $G \in N$ that is $\mathbb{P}$-generic over $H$ and $R \in H[G]$)
 and $\mathbb{P}$ has size $(2^\omega)^H = \omega_1^H$.
Then because $\p(\omega_1)^L = \p(\omega_1)^H$,
 a forcing $\tilde{\mathbb{P}}$ isomorphic to $\mathbb{P}$ is in $L$.
Also because $\p(\omega_1)^L = \p(\omega_1)^H$,
 all dense subsets of $\tilde{\mathbb{P}}$ in $H$ are already in $L$.
Let $G \subseteq \tilde{\mathbb{P}}$ in $N$ be
 $\tilde{\mathbb{P}}$-generic over $L$
 such that $R \in L[G]$.
Note that this implies $A \in L[G]$.

By a density argument for $\mathbb{C}_{\omega_2}$,
 the set $A \subseteq \omega_2^L$ has no
 subset of size $\omega_2^L$ in $L$.
On the other hand,
 let $\dot{A}$ be a $\tilde{\mathbb{P}}$ name for $A$.
For each $\alpha \in A$,
 let $p_\alpha \in G$ be a condition
 such that $p_\alpha \forces \check{\alpha} \in \dot{A}$.
Since $(|\tilde{\mathbb{P}}| < \omega_2)^L$,
 fix a $p \in G$ such that
 $p = p_\alpha$ for a size $\omega_2^L$ set of $\alpha \in A$.
Now the set
 $$\{ \alpha < \omega_2^L :
 p \forces \check{\alpha} \in \dot{A} \}$$
 is a size $\omega_2^L$ subset of $A$ in $L$,
 which is a contradiction.
\end{proof}

We mentioned in the proof above that
 the three step iteration of almost disjoint coding
 to code a subset of $\omega_3$ into a subset of $\omega$
 may not be weakly homogeneous.
The argument in the proof above
 also shows us why:
 start with $V = L$ and let $A \subseteq \omega_3$
 be a Cohen subset of $\omega_3$.
Let $R \subseteq \omega$ arise from the
 three step iteration $\mathbb{Q} \in L[A]$ of almost disjoint coding
 to code $A \subseteq \omega_3$ into a subset of $\omega$.
Suppose towards a contradiction that $\mathbb{Q}$ is weakly homogeneous.
Then $\hod^{L[R]} \subseteq L[A]$.
By Vop\u{e}nka's Theorem,
 $R$ is generic over $\hod^{L[R]}$
 by a forcing of size $\omega_2$.
Since $\p(\omega_2)^{L} = \p(\omega_2)^{L[A]}$
 and $\hod^{L[R]}$ is intermediate between $L$ and $L[A]$,
 there must be some $\tilde{\mbb{P}} \in L$ of size $\omega_2$
 such that $R$ is in a $\tilde{\mbb{P}}$-generic extension $L[G]$ of $L$.
But now since $A \in L[G]$ and $|\tilde{\mbb{P}}| \le \omega_2$,
 $A$ has a size $\omega_3$ subset in $L$.
This contradicts $A$ being Cohen generic over $L$.

\section{Questions}

\subsection{What can replace $\mbb{H}$?}

\begin{question}
Let $M$ be a c.t.m.\ of $\zfc$.
What are the forcings $\mathbb{P} \in M$ such that
 every real $a \in \baire - M$
 is $(\mathbb{P},M)$-helpful?
Does Cohen forcing work?
What about a forcing which is $^\omega \omega$-bounding?
\end{question}

\subsection{Generically coding subsets of $\omega_1$
 with help}

Given a transitive model $M$,
 it is natural to ask whether subsets of $\omega_1$
 can be coded by generics over $M$ with help.
By Theorem~\ref{generically_generic},
 this is possible as long as we pass
 to a sufficiently larger outer model $\tilde{V}$.
We suspect that passing to $\tilde{V}$ is not
 necessary provided that $M$ is large enough.
In terms of being large enough,
 note that given a forcing $\mbb{P} \in L(\mbb{R})$, if
\begin{itemize}
\item there is a surjection of $\baire$ onto
 $\mbb{P}$ in $L(\mbb{R})$,
\item $\mbb{P}$ is countably closed,
\item there is a proper class of Woodin cardinals, and
\item $\ch$ holds,
\end{itemize}
 then there is a $\mbb{P}$-generic over $L(\mbb{R})$
 in $V$.
Here is a proof of this fact
 (pointed out by Paul Larson):
 every set of reals in $L(\mbb{R})$
 is the continuous preimage of $\mathbb{R}^\#$,
 so there are at most $2^\omega$ sets of reals in $L(\mathbb{R})$.
But, because $\ch$ holds,
 there are $\omega_1$ sets of reals in $L(\mathbb{R})$.
So there are $\omega_1$ dense subsets of $\mathbb{P}$ in $L(\mathbb{R})$.
Let $\langle D_\alpha : \alpha < \omega_1 \rangle$ be an
 enumeration of all these dense sets.
By the fact that $\mbb{P}$ is countably closed,
 we can hit all $\omega_1$ dense sets by
 forming a length $\omega_1$ decreasing sequence through $\mathbb{P}$
 (here we also use that ${^{<{\omega_1}} \mbb{P}} \subseteq L(\mathbb{R})$).

So, we ask the following
 (where we have weakened arbitrary help
 $\bar{a} \in \p(\omega_1) - L(\mbb{R})$ to some fixed
 help $\bar{a} \in \p(\mbb{R})$):
\begin{question}
\label{omega_one}
Assume $\ch$ and a proper class of Woodin cardinals.
Is there some $\bar{a} \subseteq \mbb{R}$
 and some forcing $\mbb{P} \in L(\mbb{R})$ that is
 countably closed such that given any
 $X \subseteq \omega_1$,
 there is a $G$ that is $\mbb{P}$-generic over $L(\mbb{R})$
 such that $X \in L(\bar{a}, G, \mbb{R})$?
\end{question}

Along similar lines, Woodin has conjectured
 (Section 10.6 of \cite{Woodin})
 that assuming $\ch$ and a
 $\textit{measurable}$ Woodin cardinal,
 then for any $X \subseteq \omega_1$,
 there is some $B \subseteq \mbb{R}$
 such that $L(B,\mbb{R}) \models \ad^+$
 and $X \in L(B,\mbb{R})[G]$
 for some $G$ that is
 $\mbox{Col}(\omega_1, \mbb{R})$-generic over
 $L(\mbb{R},B)$.

Assume Woodin's conjecture is true
 and assume $V$ satisfies $\ch$ and has a measurable Woodin cardinal.
Let $\mc{C}$ be the collection of all
 inner models of $\ad^+$ containing all the reals.
Then every subset of $\omega_1$
 (and therefore every subset of $\mathbb{R}$ because we are assuming $\ch$)
 is generic over some model in $\mc{C}$.
Our question above asks whether the smallest model in $\mc{C}$,
 namely $L(\mathbb{R})$,
 is still large enough so that
 $(\exists \bar{a} \subseteq \mathbb{R})
  (\exists \mathbb{P} \in L(\mathbb{R}))
  (\forall X \subseteq \omega_1)
  (\exists G$ that is $\mbb{P}$-generic over $L(\mbb{R}))\, X \in L(\bar{a},G,\mathbb{R}).$

\section{Addendum}

In \cite{coding_the_universe},
 it is shown (in $\zfc$) that
 there exists a real $c$ that is class generic over $V$
 such that $V \subseteq L[c]$
 and $\omega_1^L < \omega_1$ in the extension.
Consider the model $V[c]$.
In this model, there are two reals $g_1, g_2$
 both Cohen generic over $L$
 such that $c \le_T g_1 \oplus g_2$.
Hence, there are two set generic extensions of $L$ (in $V[c]$)
 that together code all of $V$.
We observe that the Generic Coding with Help Theorem
 gives us a related result:
\begin{theorem}
Let $M$ be any transitive model of $\zf$.
Let $\bar{a}$ be a set of ordinals not in $M$
 but such that $\mbox{sup}( \bar{a} ) \in M$.
Then there is a $G$ that is set generic over $M$
 (which exists in a class forcing extension of $V$)
 such that $V \subseteq L[G][\bar{a}]$.
\end{theorem}
\begin{proof}
Let $\lambda$ be the ordinal $\mbox{sup}( \bar{a} )$.
Let $\mathbb{P} = (\mbox{Col}(\omega,\lambda)*
 \mathbb{H})^M$.
Let $\tilde{V}$ be a set forcing extension
 of $V$ in which $\mc{P}^M(\mathbb{P})$
 is countable.
Let $c$ be a real class generic over $\tilde{V}$
 such that $\tilde{V} \subseteq L[c]$.
Apply Theorem~\ref{generically_generic}
 in $\tilde{V}[c]$
 to get a $G$ that is $\mathbb{P}$-generic over $M$
 such that $c \in L[G][\bar{a}]$.
So we have
 $$V \subseteq \tilde{V} \subseteq L[c]
 \subseteq L[G][\bar{a}].$$
\end{proof}

Trying to find a $G$ such that $V = L[G][\bar{a}]$
 is impossible in general, because $V$
 may not be of the form $L[X]$ for any set $X$.
However, note that if $V = L[r]$ for
 a real $r$
 and if $M$ is any transitive model of $\zf$
 such that $\mathbb{R}^M$ is countable
 and if $\bar{a}$ is any \textit{real} not in $M$,
 then we \textit{can} find a $G$ that is set generic
 over $M$ such that $V = L[G][\bar{a}]$.
To do this,
 use the Generic Coding with Help Theorem
 to get a real $G$ (in $V$)
 that is $\mathbb{H}^M$-generic over $M$
 such that $r \in L[G][\bar{a}]$.
Now $V = L[r] \subseteq L[G][r] \subseteq L[G][\bar{a}]
 \subseteq V$.

\section{Acknowledgements}

We would like to thank
 Andreas Blass for discussing the
 Generic Coding with Help method.
We would also like to thank Paul Larson
 for pointing out situations where there
 are generics over $L(\mbb{R})$.

\end{document}